\newtheorem{Theo}{Theorem}
\newtheorem{Lem}[Theo]{Lemma}
\newtheorem{Cor}[Theo]{Corollary}
\newtheorem{Prop}[Theo]{Proposition}
\newtheorem{Prob}{Problem}
\newcommand{\Z}{\mathbb{Z}}
\newcommand{\Aut}{\mathrm{Aut}}
\newcommand{\F}{\mathbb{F}}
\newcommand{\rk}{\mathrm{rk}}
\newcommand{\normal}{\triangleleft}
\begin{document}
\title[Normal and characteristic subgroup growth]{Large normal subgroup growth and large characteristic subgroup growth}
\author[Y. Barnea]{Yiftach Barnea}
\address{
Department of Mathematics, Royal Holloway, University of London, Egham, Surrey, TW20 0EX, United Kingdom}
\email{y.barnea@rhul.ac.uk}
\author[J.-C. Schlage-Puchta]{Jan-Christoph Schlage-Puchta}
\address{
Institut f\"ur Mathematik Ulmenstr. 69 18051 Rostock, Germany}
\email{jan-christoph.schlage-puchta@uni-rostock.de}
\thanks{2010 Mathematics subject classification. 20E07, 20E18, 20F05}
\thanks{Keywords: normal subgroup growth, characteristic subgroup growth, Golod-Shafarevich groups, large groups, profinite groups}
\maketitle
\begin{abstract}
The fastest normal subgroup growth type of a finitely generated group is $n^{\log n}$. Very little is known about groups with this type of growth. In particular, the following is a long standing problem: Let $\Gamma$ be a group and $\Delta$ a subgroup of finite index. Suppose $\Delta$ has normal subgroup growth of type $n^{\log n}$, does $\Gamma$ has normal subgroup growth of type $n^{\log n}$? We give a positive answer in some cases, generalizing a result of M\"uller and the second author and a result of Gerdau. For instance, suppose $G$ is a profinite group and $H$ an open subgroup of $G$. We show that if $H$ is a generalized Golod-Shafarevich group, then $G$ has normal subgroup growth of type of $n^{\log n}$. We also use our methods to show that one can find a group with characteristic subgroup growth of type $n^{\log n}$.
\end{abstract}

\section{Introduction and results}

For a group $\Gamma$ let $s_n^\normal(\Gamma)$ be the number of normal subgroups of $\Gamma$ of index at most $n$. Very little in known about the possible asymptotic behaviour of this sequence, see \cite[Chapter 2 and Section 9.4]{subgroup growth} for background. Lubotzky in \cite{Lub} showed, that for any finitely generated group we have that $s_n^\normal(\Gamma)\ll n^{c\Omega(n)}$ for some constant $c$, where $\Omega(n)$ denotes the number of prime divisors of $n$ counted with multiplicity, and Mann in \cite{Mann} showed that for a non-abelian free group we have that $s_n^\normal(\Gamma)> n^{c\log n}$ for some $c>0$ and infinitely many $n$. Comparing these results we find that the normal subgroup growth of a non-abelian free group is of type $n^{\log n}$. We say that a function $f(n$) is of \textit{type} $g(n)$ if there are constants $c_1, c_2$, such that $\log f(n)\leq c_1\log g(n)$ for all $n$, and there are infinitely many $n$ such that $\log f(n)>c_2\log g(n)$.

Our first theorem, Theorem~\ref{thm:elementary} is quite technical. Therefore, to motivate the reader, we start with Corollary~\ref{Cor:characteristic}  that might have more general interest. It concerns with characteristic subgroup growth. In general the characteristic subgroup growth of a group is even more mysterious than the normal subgroup growth, in particular, determining the characteristic growth of a free non-abelian group appears to be quite difficult. However, the following is a simple consequence of Theorem~\ref{thm:elementary}.
\begin{Cor}
\label{Cor:characteristic}
Let $\Gamma$ be a virtually non-abelian free group with trivial center and suppose $\Gamma$ has a finite outer automorphism group. Then $\Gamma$ has characteristic subgroup growth of type $n^{\log n}$. In particular if $\Gamma=A\ast B$ is the free product of two non-trivial finite groups such that at least one has order greater than 2, then $\Gamma$ has characteristic subgroup growth of type $n^{\log n}$.
\end{Cor}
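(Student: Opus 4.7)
The plan is to reduce the counting of characteristic subgroups of $\Gamma$ to the counting of normal subgroups of $\tilde{\Gamma} := \Aut(\Gamma)$, and then apply Theorem~\ref{thm:elementary}(2) to $\tilde{\Gamma}$. Because $\Gamma$ has trivial centre, the inner-automorphism map identifies $\Gamma$ with a subgroup of $\tilde{\Gamma}$, and by the hypothesis on $\mathrm{Out}(\Gamma)$ this subgroup is of finite index. The crucial point is that a subgroup $N\leq\Gamma$ is characteristic in $\Gamma$ if and only if it is normal in $\tilde{\Gamma}$, so counting characteristic subgroups of $\Gamma$ by their index in $\Gamma$ is the same, up to the constant factor $|\mathrm{Out}(\Gamma)|$ in the index, as counting subgroups of $\Gamma$ that are normal in $\tilde{\Gamma}$.

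Since $\Gamma$ is virtually non-abelian free, so is $\tilde{\Gamma}$; by intersecting a finite-index free subgroup of $\Gamma$ with its finitely many $\tilde{\Gamma}$-conjugates I obtain a non-abelian free subgroup $F\leq\Gamma$ which is normal in $\tilde{\Gamma}$ and of finite index in $\Gamma$. Now I run exactly the Gerdau-style construction appearing after the statement of Theorem~\ref{thm:elementary}, but with $\tilde{\Gamma}$ as the ambient group and $F$ as $\Delta$: let $H$ be the pro-$p$ completion of $F$, let $K$ be the kernel of $\widehat{F}\to H$ (characteristic in $\widehat{F}$, hence normal in $\widehat{\tilde{\Gamma}}$), and set $G:=\widehat{\tilde{\Gamma}}/K$, inside which $H$ sits as an open subgroup. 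For each sufficiently large $d$, pick an open normal subgroup $N_d$ of $G$ contained in $H$ whose quotient $H/N_d$ is a free pro-$p$ group of rank $d$; the pre-image $\Omega_d$ of $N_d$ in $F$ is then a subgroup of $F$ of $p$-power index, normal in $\tilde{\Gamma}$, and admits the free CMEA-group on $d$ generators as a quotient, so $\rk_{cm}\Omega_d\geq\binom{d+1}{2}$ while $(\tilde{\Gamma}:\Omega_d)$ grows linearly in $d$. Thus the hypothesis $\rk_{cm}\Omega_d>c(\tilde{\Gamma}:\Omega_d)^2$ of Theorem~\ref{thm:elementary}(2) holds for a suitable $c>0$, and that theorem yields that the number of subgroups of $F$ normal in $\tilde{\Gamma}$ is of growth type $n^{\log n}$; each such subgroup is characteristic in $\Gamma$, and the matching upper bound follows from Lubotzky's general estimate applied to the finitely generated group $\Gamma$.

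For the \emph{in particular} statement, I must verify that $\Gamma=A\ast B$ with $|A|,|B|\geq 2$ and $\max(|A|,|B|)>2$ satisfies all three hypotheses. Triviality of the centre of a non-trivial free product is standard; virtual non-abelian freeness follows from the Kurosh subgroup theorem applied to the kernel of the natural map $A\ast B\to A\times B$, which is non-abelian free of finite index under the stated conditions; and $\mathrm{Out}(A\ast B)$ is finite by the classical description of $\Aut(A\ast B)$ due to Fouxe-Rabinovitch for finite free factors. Hence the first assertion of the corollary applies.

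The main obstacle I anticipate is ensuring that the sequence $\Omega_d$ is genuinely normal in $\tilde{\Gamma}$ and not merely in $\Gamma$; the passage through the completion $\widehat{\tilde{\Gamma}}/K$, rather than through $\widehat{\Gamma}/(\text{something})$, is the device that arranges this, and once it is in place the growth estimate is a direct invocation of Theorem~\ref{thm:elementary}(2).
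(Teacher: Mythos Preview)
Your argument is correct and mirrors the paper's own proof: embed $\Gamma$ in $\Aut(\Gamma)$ via the trivial-centre hypothesis, identify characteristic subgroups of $\Gamma$ with subgroups normal in $\Aut(\Gamma)$, and then invoke Theorem~\ref{thm:elementary}(2) through the Gerdau-style construction on a free finite-index normal subgroup --- the paper is terser, writing only ``apply Theorem~\ref{thm:elementary} to the pair $\Aut(\Gamma)>\Gamma$'', whereas you make the choice of the free subgroup $F$ and the sequence $\Omega_d$ explicit. One slip to fix: you say ``$H/N_d$ is a free pro-$p$ group of rank $d$'', but $H/N_d$ is finite; what you need (and what your subsequent CMEA estimate actually uses) is that $N_d$ itself is free pro-$p$ of rank $d$. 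For the free-product clause the paper cites Pettet rather than Fouxe--Rabinovitch for the finiteness of $\mathrm{Out}(A\ast B)$, but either reference suffices.
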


We would like to thank  H.~Wilton for suggesting the example of free product of two non-trivial finite groups and for L.~Mosher, for pointing us to the literature about its outer automorphisms, see the following question in MathOverflow \cite{MathOverflow big}.

We return now to consider normal subgroup growth. One of the basic problems concerning the normal subgroup growth is the question whether we can compare the normal subgroup growth of a group and a subgroup of finite index. If $\Delta<\Gamma$ is a subgroup of finite index, we can intersect normal subgroups of $\Gamma$ with $\Delta$, that is, if $\Gamma$ has many normal subgroups, so has $\Delta$. More precisely, a slight variation of the proof of \cite[Proposition~1.3.2~(ii)]{subgroup growth} gives us that $s_n^\normal(\Gamma) \leq s_n^\normal(\Delta) n^{(\Gamma:\Delta)}$. Therefore, unless we are aiming at results of high precision, the difficult problem is to decide whether a finite index subgroup of a group $\Gamma$ can have substantially more normal subgroups than $\Gamma$ itself. Lubotzky and Segal in \cite[Problem 4 (a)]{subgroup growth} ask the following.
\begin{Prob}
Let $\Gamma$ be a group and $\Delta$ a subgroup of finite index. Suppose $\Gamma$ has polynomial normal subgroup growth, that is,  type $n$, does $\Delta$ have polynomial normal subgroup growth?
\end{Prob}
The fundamental problem we consider is a slight variation on it.
\begin{Prob}
Let $\Gamma$ be a group and $\Delta$ a subgroup of finite index. Suppose $\Delta$ has normal subgroup growth of type $n^{\log n}$, does $\Gamma$ has normal subgroup growth of type $n^{\log n}$?
\end{Prob}
This basic question seems to be quite hard and unfortunately we are only able to answer it in special cases.
In \cite[Theorem~1]{normal II}, M\"uller and the second author claimed the following theorem. If $\Gamma$ is a finitely generated group and $\Delta$ is a normal subgroup of finite index such that $\Delta$ maps onto a group $G$ such the pro-$p$ completion of $G$ is a non-abelian free pro-$p$ group for some prime $p$ and $p\nmid(\Gamma:\Delta)$, then $\Gamma$ has normal subgroup growth of type $n^{\log n}$. The proof used Mann's construction of large elementary abelian sections and representation theory to study the action of $\Gamma/\Delta$ on these sections. Gerdau in \cite{Victor} claimed that the condition $p\nmid|\Gamma/\Delta|$ is not necessary. In his proof Gerdau replaced ordinary representation theory by modular representation theory. Notice that actually there is no need to require $\Delta$ to be normal. However, a careful examination of both proofs shows that weaker results are proven, namely, if $\Gamma$ is a finitely generated group and $\Delta$ is a normal subgroup of finite index such that the pro-$p$ completion of $\Delta$ is a non-abelian free pro-$p$ group, then $\Gamma$ has normal subgroup growth of type $n^{\log n}$. We do not know whether the original statements are true. The problem in the proofs is that a group that contains a subgroup of finite index which projects onto a free group needs not project onto a virtually free group. An example of such group is the semi-direct product $C_2 \ltimes (F_2\times F_2)$, where $C_2$ acts by interchanging the factors of the direct product. This has a subgroup of order 2, that projects onto $F_2$, but does not project onto any virtually (non-abelian free) group. However, it is still true that all virtually (non-abelian free) groups have normal subgroup growth of type $n^{\log n}$.

In this note we give a couple of far more general results.  Our first result appears rather technical, however, in concrete cases the conditions of the theorem are easy to establish. We write $d(\Gamma)$ for the minimal number of generators of $\Gamma$ and if $\Delta$ is a normal subgroup of $\Gamma$ we write $d_{\Gamma}(\Delta)$ for the minimal number of generators of $\Delta$ as a normal subgroup of $\Gamma$, in the case of topological groups we take topological generators. We call a group a \textit{CMEA group} if it is a central extension of an elementary abelian $p$-group by an elementary abelian $p$-group. We write $\rk_{cm} G$ for the \textit{CMEA rank} of a pro-$p$ group $G$, that is, the logarithm to base $p$ of the maximal order of a CMEA image of $G$.

\begin{Theo}
\label{thm:elementary}
Let $\Gamma$ be a $d$-generated group, $\Delta$ a normal subgroup of finite index in $\Gamma$ and $p$ a prime number.
\begin{enumerate}
\item Let $H$ be the pro-$p$ completion of $\Delta$ and let $\Psi$ be the pre-image of $\Phi(H)$ under the canonical map $\Delta \rightarrow H$. Then $d_{\Gamma}(\Psi)\geq \frac{rk_{cm} H}{(\Gamma:\Delta)}-d$.
\item Let $c>0$ be a real number. Suppose $\Delta_i$ is an infinite sequence of normal subgroup of $\Gamma$ contained in $\Delta$ and of index $p$-power in $\Delta$. If for all $i$ we have that $\rk_{cm} \Delta_i>c(\Gamma:\Delta_i)^2$, then $\Gamma$ has normal subgroup growth of type $n^{\log n}$. In fact, the number of subgroups of $\Delta$, which are normal in $\Gamma$, has growth type $n^{\log n}$.
\end{enumerate}
\end{Theo}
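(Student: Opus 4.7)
The plan is as follows. For Part (1), I consider the maximal CMEA quotient $Q$ of $H$: since CMEA groups form a variety, $Q=H/N$ for a characteristic closed subgroup $N\normal H$, and $Q$ carries a central elementary abelian subgroup $A$ with $Q/A$ elementary abelian and $|Q|=p^{\rk_{cm} H}$. Because $N$ is characteristic, $\Gamma$ acts on $Q$; the action of $\Delta$ factors through the surjection $\Delta\to Q$ and so is inner, hence trivial on the central subgroup $A$. Thus $A$ and its subgroup $\Phi(Q)$ are naturally modules over $R=\F_p[\Gamma/\Delta]$. Since surjections of pro-$p$ groups preserve Frattini subgroups, the image of $\Psi$ in $Q$ via $\Delta\to H\to Q$ equals $\Phi(Q)$; therefore any $k$ normal generators of $\Psi$ in $\Gamma$ project to $R$-module generators of $\Phi(Q)$, which gives $d_\Gamma(\Psi)\geq d_R(\Phi(Q))$. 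The bound $d_R(\Phi(Q))\geq\dim_{\F_p}\Phi(Q)/|\Gamma/\Delta|$ together with $\dim\Phi(Q)\geq\rk_{cm} H-d(H)$ and the Schreier estimate $d(H)\leq d(\Delta)\leq d\,(\Gamma:\Delta)$ yields the claim.

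For Part (2), I would apply Part (1) to each $\Delta_i$ to produce $\Psi_i\normal\Gamma$ with $\Psi_i\leq\Delta_i$ of $p$-power index and $d_\Gamma(\Psi_i)\geq c(\Gamma:\Delta_i)-d$. The decisive observation is that the quotient $V_i:=\Psi_i/[\Gamma,\Psi_i]\Psi_i^p$ is an elementary abelian $p$-group on which $\Gamma$ acts \emph{trivially}, since $\gamma\psi\gamma^{-1}\psi^{-1}\in[\Gamma,\Psi_i]$ for every $\gamma\in\Gamma$ and $\psi\in\Psi_i$. Hence every $\F_p$-subspace of $V_i$ is $\Gamma$-invariant and lifts to a normal subgroup of $\Gamma$ (contained in $\Delta$) containing $[\Gamma,\Psi_i]\Psi_i^p$. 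To bound $\dim V_i$ from below, I would observe that $V_i$ surjects onto the finite elementary abelian quotient $W_i=\Phi(H_i)/\overline{[\Gamma,\Phi(H_i)]\Phi(H_i)^p}$ of the pro-$p$ group $\Phi(H_i)$, and re-run the argument of Part (1) for $\Phi(H_i)$ itself (whose image in $Q_i$ remains $\Phi(Q_i)\leq A_i$) to deduce $\dim V_i\geq\dim W_i\geq c(\Gamma:\Delta_i)-d$. Taking subspaces of $V_i$ of codimension about $\dim V_i/2$ then produces at least $p^{(\dim V_i)^2/4}$ normal subgroups of $\Gamma$ of index at most $(\Gamma:\Delta_i)\,p^{d(H_i)+\dim V_i/2}$, and a direct comparison of $\log s_n^\normal(\Gamma)$ with $(\log n)^2$ along the resulting sequence of indices $n_i$ gives growth of type $n^{\log n}$ on the lower side, which combined with Lubotzky's universal upper bound from \cite{Lub} pins down the type.

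The main technical obstacle is the lower bound $\dim V_i\geq c(\Gamma:\Delta_i)-d$. The direction $\dim V_i\leq d_\Gamma(\Psi_i)$ is immediate from the fact that normal generators project to vector-space generators of the trivial module $V_i$, but the reverse is a Nakayama-type statement that can genuinely fail for an abstract normal subgroup $\Psi_i$ of $\Gamma$, which may admit non-trivial quotients $\Psi_i/N$ satisfying $[\Gamma,\Psi_i/N](\Psi_i/N)^p=\Psi_i/N$. The resolution sketched above is to not try to equate $\dim V_i$ with $d_\Gamma(\Psi_i)$, but instead to extract the dimension bound through the pro-$p$ group $\Phi(H_i)$, where the Frattini/Nakayama calculus does apply and the CMEA estimate of Part (1) carries through essentially verbatim.
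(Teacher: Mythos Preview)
Your Part~(1) is correct and coincides with the paper's proof: both identify the maximal CMEA quotient $Q=H/L$ with $L=\Phi(H)^p[\Phi(H),H]$, observe that $\Phi(Q)\cong\Psi/\Lambda$ is central in $\Delta/\Lambda$ and hence an $\F_p[\Gamma/\Delta]$-module, and bound $d_\Gamma(\Psi)$ below by $\dim_{\F_p}\Phi(Q)/(\Gamma:\Delta)$. The only cosmetic difference is that the paper phrases the module bound as a conjugacy-class count.

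Your Part~(2), however, has a genuine gap at the step ``re-run the argument of Part~(1) \ldots\ to deduce $\dim W_i\geq c(\Gamma:\Delta_i)-d$''. Part~(1) bounds the number of $\F_p[\Gamma/\Delta_i]$-module \emph{generators} of $\Phi(Q_i)$ from below, which is the same as $\dim\Phi(Q_i)/(\Gamma:\Delta_i)$; it says nothing about the $\Gamma$-\emph{coinvariants} of $\Phi(Q_i)$, and $W_i$ is exactly that coinvariant quotient. When $\F_p[\Gamma/\Delta_i]$ is not local (e.g.\ whenever $(\Gamma:\Delta)$ has a prime divisor $q\neq p$), a module can require many generators yet have trivial coinvariants: if $\Phi(Q_i)$ happens to be a direct sum of copies of a non-trivial simple module then $W_i=0$ while $d_\Gamma(\Psi_i)$ is large. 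So the passage from ``$\Psi_i$ needs many normal generators'' to ``$V_i$ has large dimension'' fails for precisely the Nakayama-type reason you yourself flag, and routing through the pro-$p$ group $\Phi(H_i)$ does not repair it, because the obstruction lives in the action of $\Gamma/\Delta$, not in the pro-$p$ part.

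The paper avoids this by \emph{not} passing to a trivial $\Gamma$-section. Instead it keeps $\Psi_i/\Lambda_i$ as a (generally non-trivial) $\F_p[\Gamma/\Delta]$-module and invokes two representation-theoretic lemmas: Lemma~\ref{Lem:module} (due to Gerdau), which says that any $\F_pG$-module of dimension $m$ has a subquotient isomorphic to $S^{\,\geq c\,m}$ for some simple $S$ and a constant $c=c(G,p)$ independent of $m$; and Lemma~\ref{Lem:counting submodules}, which says that $S^n$ has at least $e^{c'n^2}$ submodules. Submodules of $\Psi_i/\Lambda_i$ correspond to subgroups normal in $\Gamma$, and this yields the $n^{\log n}$ count without ever needing the $\Gamma$-action to be trivial. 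That modular-representation-theoretic step is the missing idea in your Part~(2).
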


Let us see that Theorem~\ref{thm:elementary} implies Gerdau's result (as proved rather than as claimed). Let $\Gamma$, $\Delta$ and $H$ be as in Gerdau's result, by our assumption $H$ is a free non-abelian pro-$p$ group and let $r \geq 2$ be its number of generators. Write $\widehat{\Gamma}$ and $\widehat{\Delta}$ for the profinite completion of $\Gamma$ and $\Delta$ respectively. Then $\widehat{\Gamma}$ contains $\widehat{\Delta}$ because $\Delta$ is of finite index in $\Gamma$. Write $K$ for the kernel of the map from $\widehat{\Delta}$ to $H$ and trivially, $\widehat{\Delta}/K \cong H$. Clearly, $H$ is the pro-$p$ completion of $\widehat{\Delta}$ so $K$ is a characteristic subgroup of $\widehat{\Delta}$ and therefore normal in $\widehat{\Gamma}$. Then $\Gamma$ maps densely into $G=\widehat{\Gamma}/K$ and $\widehat{\Delta}/K \cong H$ is contained in $G$.

Let $N$ be a normal subgroup of finite index of $G$ contained in $H$ and $\Omega$ its pre-image in $\Delta$. Since $N$ is a free pro-$p$ group its mapped onto the free CMEA-group
\[
\langle x_i, y_i, z_{ik} \mid 1\leq k < i\leq d,  x_i^p=y_i, [x_i,x_k]=z_{ik}, y_i^p=z_i^p=[x_i,y_j]=[x_i, z_{jk}]=1\rangle,
\]
with $d=(H:N)(r-1)+1=(\Delta:\Omega)(r-1)+1$ and hence $\Omega$ is mapped onto the same free CMEA-group.
In particular, we have that $\rk_{cm} \Omega \geq d+\binom{d+1}{2}$. Take $c=\frac{(r-1)^2}{2(\Gamma:\Delta)^2}>0$, which is independent of $N$, then $d = \sqrt{2c}(\Gamma:\Delta)(\Delta:\Omega)+1=\sqrt{2c}(\Gamma:\Omega)+1$. Therefore, $\binom{d+1}{2}>d^2/2 > c(\Gamma:\Omega)^2$. It follows from Theorem~\ref{thm:elementary} part~(2) that $G$ has normal subgroup growth of type $n^{\log n}$.

Next we would like to consider virtual Golod-Shafarevich groups. However, to consider non-normal subgroups we need to be able to change to a normal subgroup of finite index, but subgroups of finite index of Golod-Shafarevich groups are not necessarily Golod-Shafarevich. Thus, we consider the larger class of generalized Golod-Shafarevich groups introduced by Ershov and Jaikin-Zapirain in \cite{GGS}. We postpone the formal definition of a generalized Golod-Shafarevich group to the next section, more detailed treatments can be found in \cite{GGS} and \cite[chapter~5]{survey}.
\begin{Theo}
\label{thm:GS}
Let $G$ be a profinite group, $H$ an open subgroup, which is a generalized Golod-Shafarevich pro-$p$ group. Then $G$ has normal subgroup growth of type $n^{\log n}$.
\end{Theo}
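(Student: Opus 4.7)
The plan is to apply Theorem~\ref{thm:elementary}~(2): it suffices to exhibit, inside a fixed open normal subgroup of $G$, an infinite descending sequence of subgroups that are normal in $G$, have $p$-power index, and whose CMEA ranks grow at least quadratically in their indices in $G$.

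First I would reduce to the normal case by replacing $H$ with its normal core $\Delta = \bigcap_{g \in G} gHg^{-1}$, an open normal subgroup of $G$ contained in $H$. By the subgroup theorem of Ershov and Jaikin-Zapirain \cite{GGS}, open subgroups of a generalized Golod--Shafarevich pro-$p$ group are again GGS, so $\Delta$ is still GGS; we may therefore assume from the start that $H = \Delta$ is normal in $G$.

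The heart of the argument is then a pair of consequences of the weighted Golod--Shafarevich inequality. (i)~Every open subgroup $N$ of $\Delta$ is itself GGS, and the weighted deficit produces a constant $\kappa > 0$, depending only on $\Delta$, with $d(N) \geq \kappa (\Delta:N)$ for every such $N$. (ii)~There is a constant $\varepsilon > 0$ such that any GGS pro-$p$ group $K$ admits a CMEA quotient of $p$-rank at least $\varepsilon \cdot d(K)^2$; indeed the defining relations of a free CMEA group all have Zassenhaus weight at most $2$, and the weighted GS inequality ensures that imposing them leaves the weight-$2$ component of the graded restricted Lie algebra of dimension of order $d(K)^2$. Take $\Delta_i$ to be the $i$-th term of the lower $p$-central (Zassenhaus) series of $\Delta$; each $\Delta_i$ is characteristic in $\Delta$, hence normal in $G$, is open and has $p$-power index in $\Delta$, and $(\Delta:\Delta_i) \to \infty$ because $\Delta$ is infinite. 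Combining (i) and (ii),
\[
\rk_{cm}(\Delta_i) \;\geq\; \varepsilon\, d(\Delta_i)^2 \;\geq\; \varepsilon \kappa^2 (\Delta:\Delta_i)^2 \;\geq\; \frac{\varepsilon \kappa^2}{(G:\Delta)^2}\,(G:\Delta_i)^2,
\]
so Theorem~\ref{thm:elementary}~(2) applies with $c = \varepsilon \kappa^2 / (G:\Delta)^2$, yielding the lower bound $s_n^\normal(G) \geq n^{c' \log n}$ for some $c' > 0$. The matching upper bound $n^{c'' \log n}$ is Lubotzky's general theorem recalled in the introduction.

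The main obstacle is step (ii): showing that a GGS pro-$p$ group of generator rank $d$ really does surject onto a CMEA group of $p$-rank of order $d^2$. This requires a careful analysis of the Hilbert series of the graded restricted Lie algebra associated to the $p$-Zassenhaus filtration of $K$, checking that the GGS relations (of potentially unbounded Zassenhaus weight) together with the weight-$2$ CMEA relations do not collapse the weight-$2$ component that must realize the $\binom{d+1}{2}$ central generators of the desired CMEA image. Everything else in the argument is either bookkeeping or a straightforward appeal to the previous theorem.
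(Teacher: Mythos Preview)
Your step (i) is the genuine gap. The assertion that every open subgroup $N$ of a GGS pro-$p$ group $\Delta$ satisfies $d(N)\geq\kappa(\Delta:N)$ for some fixed $\kappa>0$ is precisely the statement that $\Delta$ has positive lower rank gradient. This is \emph{not} a known consequence of the weighted Golod--Shafarevich inequality; in fact the paper itself (Problem~6 in Section~4) leaves open whether a GGS group can have subexponential subgroup growth, which would be impossible if (i) held. What one can actually extract from the GGS condition is far weaker: if $D_n$ denotes the $n$-th dimension subgroup and $1-H_{d,X}(t_0)+H_{d,R}(t_0)=-\delta<0$, then adding lifts of normal generators of $D_n$ (each of degree $\geq n$) to the relation set must destroy the inequality, giving $d_H(D_n)\geq \delta t_0^{-n}$. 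This is a bound in the \emph{degree} $n$, not in the index $(H:D_n)$, and the latter can be astronomically larger. Converting this into anything index-controlled requires a separate case analysis (on whether $\log(D_n:D_{n+1})$ grows faster or slower than $t_0^{-n}$), and even then one only obtains $d_H(D_n)>c\log(H:D_n)$ for infinitely many $n$ --- logarithmic in the index, not linear.

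Because only this logarithmic bound is available, the paper does \emph{not} go through Theorem~\ref{thm:elementary} at all for this result. Instead it proves two lemmas tailored to the logarithmic regime: Lemma~\ref{Lem:generators} (in a pro-$p$ group, $d_G(N)>c\log(G:N)$ for infinitely many normal $N$ is equivalent to normal growth of type $n^{\log n}$) and Lemma~\ref{Lem:virtual} (passing from characteristic subgroups of an open normal $H$ to normal subgroups of $G$ costs only a factor $(G:H)$ in the constant). The dimension subgroups $D_n$ are characteristic in $H$, so these two lemmas finish the argument without any appeal to CMEA ranks or to your step (ii). Your route via Theorem~\ref{thm:elementary}~(2) would demand the quadratic bound $\rk_{cm}\Delta_i>c(G:\Delta_i)^2$, which is strictly stronger than what the GGS inequality is known to provide; indeed the paper remarks after Proposition~\ref{prop:triabelian} that it expects Theorem~\ref{thm:elementary} to be \emph{inapplicable} to some GGS groups.
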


The proof of Theorem~\ref{thm:GS} requires no representation theory, nevertheless, it also implies the result of Gerdau. Indeed, let $\Gamma$ and $\Delta$ be as in Gerdau's result. We take $\widehat{\Gamma}$, $\widehat{\Delta}$, $H$ and $K$ as above. By our assumption $H$ is a non-abelian free pro-$p$ group. As $\Delta$ is normal in $\Gamma$ (we can always assume that by passing to a subgroup of finite index), $\widehat{\Delta}$ is normal in $\widehat{\Gamma}$. Hence, $K$ is normal in $\widehat{\Gamma}$. Let $G=\widehat{\Gamma}/K$ and the result follows since non-abelian free pro-$p$ groups are generalized Golod-Shafarevich groups.

Both Theorems imply that groups of virtual positive $p$-deficiency have normal subgroup growth of type $n^{\log n}$. For the notion of $p$-deficiency we refer the reader to \cite{pdef}. This is easy to see for Theorem~\ref{thm:GS}, since groups of positive $p$-deficiency are virtually Golod-Shafarevich (see \cite[Theorem~5.5]{BT}). For Theorem~\ref{thm:elementary} this requires some computation similar to the case of virtually free groups, which we skip here.

Next we give an example, where Theorem~\ref{thm:elementary} is applicable, but Theorem~\ref{thm:GS} is not.

\begin{Prop}
\label{prop:triabelian}
Let $G$ be the quotient of the free pro-$p$ group $F$ in $d\geq 2$ generators by $\gamma_2(F')$. Then $G$ has normal growth type $n^{\log n}$.
\end{Prop}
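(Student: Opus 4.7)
The plan is to apply Theorem~\ref{thm:elementary}(2) with $\Gamma=\Delta=G$ and the infinite family $\Delta_i\normal G$ defined as the pre-image of $p^i\Z_p^d\leq G/G'=\Z_p^d$ under the canonical surjection. Then $\Delta_i\supseteq G'$, $(G:\Delta_i)=p^{id}$, and what remains is to verify $\rk_{cm}(\Delta_i)>c(G:\Delta_i)^2$ for some constant $c>0$ independent of $i$.

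The first ingredient is the lower bound $d(\Delta_i)\geq c_1 p^{id}$. To obtain it, I would project $\Delta_i$ onto its image $\bar{\Delta}_i$ in the free metabelian quotient $\bar G=G/G''=F/F''$; this image is simply the pre-image of $p^i\Z_p^d$ in $F/F''$. The Magnus-embedding computation already exploited in the paper's derivation of Gerdau's theorem (where free metabelian pro-$p$ groups play the role of free CMEA groups) shows that $\bar{\Delta}_i^{ab}$ picks up on the order of $p^{id}$ independent $\F_p$-generators beyond the $d$ coming from $p^i\Z_p^d$, whence $d(\bar{\Delta}_i)\geq c_1 p^{id}$. Since $\Delta_i$ surjects onto $\bar{\Delta}_i$, the same lower bound carries over to $d(\Delta_i)$.

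The main obstacle is the second ingredient, the quadratic bound $\rk_{cm}(\Delta_i)\geq c_2\, d(\Delta_i)^2$, and this is where the non-triviality of $G''=F''/\gamma_2(F')$ is used in an essential way. Choose topological generators $y_1^{p^i},\ldots,y_d^{p^i},\ell_1,\ldots,\ell_N$ of $\Delta_i$ with $N=d(\Delta_i)-d$ and each $\ell_k\in G'$. In sharp contrast with the metabelian situation, where the analogous commutators of $G'$-generators would be trivial, here each $[\ell_j,\ell_k]$ lies in the nontrivial abelian $\Z_p[[F/F'']]$-module $G''$, and in any CMEA image of $\Delta_i$ it becomes central of exponent dividing $p$. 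Exploiting the module structure of $G''$ (accessible through Fox calculus applied to the free pro-$p$ group $F''$ supplied by Nielsen--Schreier), one verifies that a positive proportion of the $\binom{N}{2}$ commutators $[\ell_j,\ell_k]$ remain $\F_p$-linearly independent modulo the defining CMEA relations $x^{p^2}=[x^p,y]=[x,y]^p=[[x,y],z]=1$ of the maximal CMEA quotient, yielding $\rk_{cm}(\Delta_i)\geq c_2 N^2\geq c_2 c_1^2 (G:\Delta_i)^2$.

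Combining the two estimates gives the hypothesis of Theorem~\ref{thm:elementary}(2) with $c=c_2 c_1^2$, and the desired normal subgroup growth of type $n^{\log n}$ follows. The delicate point is really the independence of the commutators in the last step; a clean implementation would probably go via an explicit Magnus-type embedding of $G=F/\gamma_2(F')$ into upper-triangular matrices over $\Lambda=\Z_p[[F/F']]$ (or a suitable two-step iteration thereof), which reduces the required $\F_p$-linear independence to a computation inside the corresponding matrix ring.
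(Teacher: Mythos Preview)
Your setup is exactly right --- the subgroups $\Delta_i$ are the same family the paper uses (preimages of $p^i\Z_p^d$), and the goal is the same quadratic lower bound on $\rk_{cm}(\Delta_i)$ needed for Theorem~\ref{thm:elementary}(2). But you are working much too hard, and the step you flag as ``delicate'' is in fact never carried out: the claimed $\F_p$-linear independence of a positive proportion of the commutators $[\ell_j,\ell_k]$ inside $G''$ is asserted, not proven, and your proposed route via a Magnus-type matrix embedding is only a hope. As written, this is a genuine gap.

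The paper's argument bypasses this computation entirely with one observation you are missing. Lift $\Delta_i$ to its preimage $N\normal F$ in the free pro-$p$ group; since $N\geq F'$, the Nielsen--Schreier theorem gives $N\cong \widehat{F}_{d_1}$ free of rank $d_1=(F:N)(d-1)+1$, and for a free pro-$p$ group the CMEA rank is trivially $d_1+\binom{d_1+1}{2}>d_1^2/2$. The point is that this CMEA quotient of $N$ is \emph{already} a quotient of $G$: from $N\geq F'$ one gets $\Phi(N)\geq F''$ and hence $\Phi_N(\Phi(N))\geq [F'',N]\cdot (F'')^p\geq\gamma_2(F')$, so factoring out $\gamma_2(F')$ does not change $(F:N)$, $(N:\Phi(N))$, or $(\Phi(N):\Phi_N(\Phi(N)))$. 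Thus $\rk_{cm}(\Delta_i)=\rk_{cm}(N)>\tfrac{(d-1)^2}{2}(G:\Delta_i)^2$ with no commutator-independence calculation whatsoever. Your attempt to prove the independence directly in $G$ would, if completed, amount to reproving by hand what freeness of $N$ gives for free.
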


This result is sharp in view of a result by Segal who showed in \cite{Dan} that metabelian groups have normal growth of type $n^{(\log n)^{1-\delta}}$ for some $\delta>0$. In other words, abelian by abelian groups cannot have large normal growth, while (nilpotent of class 2) by abelian can.

We do not have an example of a Golod-Shafarevich group which does not satisfy the conditions of Theorem~\ref{thm:elementary}, but we believe that such examples should exist. In particular, if $G$ is a Golod-Shafarevich group of subexponential subgroup growth, then Theorem~\ref{thm:GS} is applicable, but Theorem~\ref{thm:elementary} is not. While it is likely that such groups exist, no examples are known to us.

Finally we remark that both Theorem~\ref{thm:elementary} and \ref{thm:GS} apply to Fuchsian groups of positive hyperbolic volume. Such a group has a normal subgroup of finite index, which is a surface group with at least 4 generators, and both the fact that surface groups map onto large CMEA groups and that the pro-$p$ completion of surface groups is Golod-Shafarevich follow immediately from the definition. This is important, as the normal subgroup growth of Fuchsian groups was used in \cite{quasiplatonic} to count isomorphism types of algebraic curves with many automorphisms, and the corrected version of \cite{normal II} does not suffice for this purpose.

\section{Virtual Golod-Shafarevich groups}
\begin{Lem}
\label{Lem:generators}
A pro-$p$ group $G$ has normal growth of type $n^{\log n}$ if and only if there exists some $c>0$, such that for infinitely many normal subgroups $N$ of finite index we have $d_G(N)>c\log (G:N)$.
\end{Lem}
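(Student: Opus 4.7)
Throughout, for an open normal subgroup $N$ of the pro-$p$ group $G$, set $V_N := N/N^p[N,G]$. This is a finite-dimensional $\F_p$-vector space on which $G$ acts trivially (conjugation shifts $n\in N$ by an element of $[N,G]\subseteq N^p[N,G]$), so the standard pro-$p$ identity gives $d_G(N) = \dim_{\F_p} V_N$, and every $\F_p$-subspace of codimension $k$ in $V_N$ corresponds to an open normal subgroup of $G$ of index $(G:N)p^k$, with distinct subspaces yielding distinct subgroups.

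For the ``if'' direction, suppose $N$ satisfies $d := d_G(N) > c\log(G:N)$. The space $V_N$ has at least $p^{\lfloor d/2\rfloor\lceil d/2\rceil}\ge p^{(d^2-1)/4}$ subspaces of codimension $\lfloor d/2\rfloor$. Setting $n := (G:N)p^{\lfloor d/2\rfloor}$, the hypothesis $\log(G:N) < d/c$ gives
\[
\log n \;<\; d\bigl(c^{-1} + \tfrac{1}{2}\log p\bigr),
\]
so $d$ is at least a positive constant times $\log n$, and the open normal subgroups of $G$ of index at most $n$ number at least $\exp(c'(\log n)^2)$ for some $c' = c'(c,p) > 0$. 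Applied to the infinitely many $N$ provided by the hypothesis, this gives normal growth of type at least $n^{\log n}$; the matching upper bound $s_n^\normal(G)\le n^{O(\log n)}$ is the standard one recalled in the introduction.

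For the ``only if'' direction I argue by contrapositive. Assume $G$ has normal growth of type $n^{\log n}$ with constant $c_2$, and suppose for contradiction that for every $c>0$ only finitely many open normal subgroups $N$ satisfy $d_G(N) > c\log(G:N)$. Fix $c$ with $0 < c < 2c_2$ and choose $i_0$ so that $d_G(M)\le c\log(G:M)$ whenever $(G:M)\ge p^{i_0}$. Let $N(k)$ count the open normal subgroups of $G$ of index $p^k$; since $G$ is pro-$p$ every finite-index normal subgroup has $p$-power index, and every such of index $p^k$ ($k\ge 1$) is contained in one of index $p^{k-1}$ (pull back a central subgroup of order $p$ in the finite $p$-group quotient). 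The normal-in-$G$ subgroups of index $p$ in a given $M$ correspond to hyperplanes of $V_M$, numbering at most $p^{d_G(M)}$. Summing over containing subgroups therefore gives, for $k > i_0$,
\[
N(k) \;\le\; N(k-1)\cdot p^{c(k-1)\log p},
\]
and iterating yields $N(k)\le N(i_0)\cdot p^{(c/2)(\log p)k^2}$. Using $k\log p\le\log n$ and summing over $k\le\log_p n$ produces
\[
\log s_n^\normal(G) \;\le\; \tfrac{c}{2}(\log n)^2 + O(\log\log n),
\]
which contradicts $\log s_n^\normal(G) > c_2(\log n)^2$ infinitely often since $c < 2c_2$.

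The main technical point is the inductive counting in the backward direction; the overcounting incurred by summing over all containing subgroups of index $p^{k-1}$ is harmless (each chief factor is cyclic of order $p$ and central in the quotient, so the induction loses only constants), and the key observation is that the linear-in-$\log(G:N)$ bound on $d_G(N)$ telescopes into a quadratic-in-$\log n$ bound on $\log s_n^\normal(G)$. Choosing $c$ strictly smaller than $2c_2$ then suffices to close the contradiction.
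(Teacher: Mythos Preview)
Your proof is correct and follows essentially the same approach as the paper's: for the ``if'' direction you count subspaces of $N/\Phi_G(N)$ just as the paper does, and for the ``only if'' you use the same chain-counting through $G$-normal index-$p$ steps (bounding the number of choices at each step by $p^{d_G(N_i)}$). The only cosmetic difference is that the paper phrases the converse via an arbitrary function $f(n)=o(n)$ bounding $d_G(N)$ and concludes $s_{p^n}^{\normal}(G)\le p^{o(n^2)}$, whereas you fix a specific $c<2c_2$ to obtain an explicit contradiction with the constant $c_2$ from the definition of growth type; the underlying argument is identical.
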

\begin{proof}
Suppose that $N$ is an open normal subgroup satisfying $d_G(N)>c\log(G:N)$. Then $N/\Phi_G(N)$ is a vector space of dimension $>c\log(G:N)$, thus, this space contains at least $p^{\frac{c^2}{4}\log(G:N)^2}$ subspaces of codimension $d=\left\lfloor \frac{c}{2}\log(G:N) \right\rfloor$. Since every subspace corresponds to a normal subgroup of index $(G:N)p^d$, we conclude that $G$ has normal growth of type $n^{\log n}$.

Suppose on the other hand that there exists a function $f(n)$, $f(n)=o(n)$, such that $d_G(N)\leq f(n)$ for all $N\normal G$ with $(G:N)=p^n$. Then for each normal subgroup $N$ with $(G:N)=p^n$ there exists a sequence $G=N_0>N_1>N_2>\dots>N_n=N$ with $N_i\normal G$ and $(N_i:N_{i+1})=p$. If $N_i$ is fixed, $N_{i+1}$ can be chosen in $\frac{p^{d_G(N_i)-1}}{p-1}$ ways, hence the number of normal subgroups of index $p^n$ is at most $p^{\sum_{\nu\leq n} f(\nu)} = p^{o(n^2)}$, and we conclude that the normal growth of $G$ is not of type $n^{\log n}$.
\end{proof}

\begin{Lem}
\label{Lem:virtual}
Let $G$ be a profinite group, which contains an open normal subgroup $H$ that is a pro-$p$ group. If there exists some $c>0$ such that $H$ contains infinitely many characteristic open subgroups $N$ satisfying $d_H(N)>c\log(H:N)$, then $G$ has normal growth $n^{\log n}$
\end{Lem}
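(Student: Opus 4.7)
The plan is to exploit that both $N$ and $\Phi_H(N)=[H,N]N^p$ are characteristic in $H$, hence normal in $G$. Writing $m=(G:H)$, $V=N/\Phi_H(N)$ and $d=\dim_{\F_p}V=d_H(N)>c\log(H:N)$, the quotient $V$ carries a natural $\F_p[G/H]$-module structure (with $H$ acting trivially), and $G/H$-invariant $\F_p$-subspaces of $V$ correspond bijectively to normal subgroups $M$ of $G$ with $\Phi_H(N)\leq M\leq N$. It therefore suffices to exhibit at least $p^{c'd^{2}/m}$ such invariant subspaces, for some $c'>0$ depending only on $c,m,p$.

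For this I would use a symmetrization argument. Set $k=\lfloor d/(4m)\rfloor$ and, for each $\F_p$-subspace $W\subseteq V$ of codimension $k$, form $W^\circ:=\bigcap_{g\in G/H}gW$. Each $W^\circ$ is $G/H$-invariant, and being an intersection of $m$ subspaces of codimension $k$ it has codimension at most $mk\leq d/4$ in $V$. The total number of codim-$k$ subspaces $W$ is $\binom{d}{k}_p\geq p^{k(d-k)}$. For a fixed $G/H$-invariant $L$ in the image, the fiber $\{W:W^\circ=L\}$ is contained in the set of codim-$k$ subspaces of $V$ containing $L$, which is in bijection with codim-$k$ subspaces of $V/L$; since $\dim V/L\leq mk$, this has size at most $\binom{mk}{k}_p\leq p^{k^{2}(m-1)+O(k)}$.

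Dividing, the number of distinct $W^\circ$ is at least $p^{k(d-k)-k^{2}(m-1)-O(k)}=p^{k(d-km)-O(k)}\geq p^{d^{2}/(8m)}$ for $d$ sufficiently large. Each image $L$ yields a normal subgroup $M_L$ of $G$ with $\Phi_H(N)\leq M_L\leq N$ and $(G:M_L)\leq m(H:N)p^{mk}\leq m(H:N)p^{d/4}$. The assumption $d>c\log(H:N)$ gives $\log(G:M_L)\leq Cd$ for a constant $C=C(c,m,p)$. Writing $n_N:=m(H:N)p^{d/4}$, the number of normal subgroups of $G$ of index at most $n_N$ is therefore at least $\exp\!\bigl(c''(\log n_N)^{2}\bigr)$ for some $c''>0$. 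Since the hypothesis provides infinitely many such $N$ with $(H:N)\to\infty$, one has $n_N\to\infty$, and hence $G$ has normal subgroup growth of type $n^{\log n}$.

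The main technical point is the fiber estimate paired with the choice $k\approx d/(4m)$, calibrated so that the symmetrization ``loss'' $k^{2}(m-1)$ in the exponent is strictly dominated by the ``gain'' $k(d-k)$, leaving a net positive exponent of order $d^{2}/m$. Notably the argument is purely combinatorial and uses no representation theory of $G/H$: only its order enters, so there is no need to distinguish the cases $p\mid m$ and $p\nmid m$.
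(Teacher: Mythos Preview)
Your argument is correct, but the paper takes a considerably shorter route. The paper simply observes that $d_G(N)\geq d_H(N)/(G:H)$: if $x_1,\dots,x_d$ generate $N$ as a normal subgroup of $G$ and $g_1,\dots,g_m$ represent the cosets of $H$ in $G$, then the conjugates $x_i^{g_j}$ generate $N$ as a normal subgroup of $H$, so $md\geq d_H(N)$. This yields $d_G(N)\geq \frac{c}{2(G:H)}\log(G:N)$ for all but finitely many of the given $N$, and then the counting in the first paragraph of the proof of Lemma~\ref{Lem:generators} applies verbatim (it only needs $N$ to be pro-$p$): every $\F_p$-subspace of $N/\Phi_G(N)$ is automatically $G$-invariant because $G$ acts trivially on this quotient, and there are at least $p^{c'(\log(G:N))^2}$ such subspaces of suitable codimension.

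You instead keep the larger quotient $V=N/\Phi_H(N)$, on which $G/H$ acts nontrivially, and manufacture invariant subspaces by intersecting $G/H$-orbits of arbitrary codimension-$k$ subspaces, with the fibre bound controlling overcounting. The calibration $k\approx d/(4m)$ and the resulting exponent of order $d^2/m$ are correct. In effect you recover combinatorially what the paper extracts from the one-line inequality $d_G(N)\geq d_H(N)/m$: note that $N/\Phi_G(N)$ is exactly the $G/H$-coinvariants of your $V$, so the paper's normal subgroups are precisely those invariant subspaces of $V$ containing $[G,N]\Phi_H(N)/\Phi_H(N)$, a subfamily of your $W^\circ$'s. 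The paper's argument is shorter and, like yours, entirely free of representation theory; your symmetrization has the merit of making the invariant-subspace count explicit without first passing to coinvariants, and the fibre-counting trick may be of independent interest.
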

\begin{proof}
Suppose that $N$ is a characteristic subgroup of $H$ satisfying $d_H(N)>c\log(H:N)$. Then $N$ is normal in $G$, let $x_1, \ldots, x_d$ be elements generating $N$ as a normal subgroup of $G$. Let $g_1, \ldots, g_k$ be representatives of the cosets $G/H$. Then $x_i^{g_j}$, $1\leq i\leq d$, $1\leq j\leq k$ generates $N$ as a normal subgroup of $H$, hence $(G:H)d\geq d_H(N)$. We conclude that
\begin{multline*}
d_G(N)\geq \frac{d_H(N)}{(G:H)}\geq \frac{c}{(G:H)}\log(H:N)\\
 = \frac{c}{(G:H)}\left(\log(G:N)-\log(G:H)\right)\geq \frac{c}{2(G:H)}\log(G:N),
\end{multline*}
provided that $(G:H)(H:N)=(G:N) > (G:H)^2$, that is, $(H:N)>(G:H)$. The latter condition excludes only finitely many $N$, and our claim follows.
\end{proof}

Let $F(X)$ be the free pro-$p$ group over the set $X$. Let $\F_p\langle\langle X\rangle\rangle$ be the ring of power series in non-commuting variables over $X$. The map $x \mapsto 1+x$ for all $x \in X$ extends to the Magnus map $\mu:F(X)\rightarrow\F_p\langle\langle X\rangle\rangle$. Magnus in \cite{Magnus} proved that it is injective. To each element $x\in X$ we associate a positive integer  $d_x$, which we call the degree of $x$. We can extend this degree to a function $D:\F_p\langle\langle X\rangle\rangle\rightarrow\mathbb{N}$ by defining the degree of a monomial to be the sum of the degrees of its factors, and the degree of a linear combination of monomials as the maximal degree of one of these monomials. Now define a degree function $d:F(X)\rightarrow\mathbb{N}$ by putting $d(w)=D(\mu(w)-1)$. It is not hard to see the an element in the $n$-dimension subgroup has degree at least $n$. For a set $A \subset F(X)$ define the Hilbert series $H_{d, A}(t)=\sum_{a\in A} t^{d(a)}$.

Let $\langle X | R\rangle$ be a presentation with $X$ finite. We say that this presentation is a generalized Golod-Shafarevich presentation, if there exists a degree function $d$ and a real number $t_0\in (0,1)$, such that $1-H_{d, X}(t_0)+H_{d,R}(t_0)<0$. A pro-$p$ group is called a generalized Golod-Shafarevich group, if it has a generalized Golod-Shafarevich presentation. Note that the usual notion of a Golod-Shafarevich group corresponds to the degree function which assigns to all $x\in X$ the degree 1. We use the same terminology as in \cite{survey}. The following is contained in \cite[Theorem~4.3 and Theorem~4.4]{survey}.
\begin{Prop}
\label{Prop:Golod-Shafarevich properties}
\begin{enumerate}
\item A generalized Golod-Shafarevich group is infinite.
\item An open subgroup of a generalized Golod-Shafarevich group is again generalized Golod-Shafarevich.
\end{enumerate}
\end{Prop}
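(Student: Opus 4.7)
The plan is to prove both parts via the graded algebra attached to a weighted filtration on the completed group algebra $\F_p[[G]]$. For part (1), given a presentation $\langle X\mid R\rangle$ with $1-H_{d,X}(t_0)+H_{d,R}(t_0)<0$ at some $t_0\in(0,1)$, I would form the associated graded algebra $A=\mathrm{gr}\,\F_p[[G]]$ for the filtration induced by the degree function $d$. The presentation of $G$ lifts to a presentation of $A$ as a quotient of the weighted non-commutative polynomial algebra $\F_p\langle X\rangle$ (with $x$ placed in degree $d_x$) by an ideal that can be generated by elements whose initial forms have degrees at least $d(r)$ for $r\in R$. A free-resolution/comparison argument in the style of Golod then yields the coefficient-wise inequality
\[
H_A(t)\bigl(1-H_{d,X}(t)+H_{d,R}(t)\bigr)\geq 1.
\]
Since $H_A$ has non-negative coefficients, evaluating at $t_0$ forces infinitely many graded pieces of $A$ to be nonzero, so $A$, and hence $G$, is infinite.

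For part (2), given an open subgroup $H\le G$, I would construct a GGS presentation of $H$ via a topological Reidemeister–Schreier procedure: fix a Schreier transversal $T$ for $H$ in $G$, so that $H$ is (topologically) generated by the set $X_H$ of nontrivial elements $tx\overline{tx}^{\,-1}$ with $t\in T$, $x\in X$, and is defined by the relations $R_H$ obtained by rewriting the conjugates $t r t^{-1}$ for $r\in R$, $t\in T$. I would lift the degree function by declaring $d(tx\overline{tx}^{\,-1}):=d_x$, so that
\[
H_{d,X_H}(t)=|G:H|\cdot H_{d,X}(t),\qquad H_{d,R_H}(t)\leq |G:H|\cdot H_{d,R}(t).
\]
Because the original presentation satisfies $1-H_{d,X}(t_0)+H_{d,R}(t_0)<0$ strictly at some $t_0\in(0,1)$, scaling both terms by $|G:H|$ preserves a strict GGS inequality, provided one absorbs the constant $1$. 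Precisely, one chooses $t_1\in(t_0,1)$ so that $|G:H|\bigl(H_{d,X}(t_1)-H_{d,R}(t_1)\bigr)>1$; this is possible by continuity and the strict negativity at $t_0$.

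The main obstacle is technical rather than conceptual: one must verify that the Reidemeister–Schreier rewriting is compatible with the weighted filtration. Concretely, when a relator $r\in R$ is conjugated by $t\in T$ and then expressed as a word in the new generators $X_H$, the degree of the resulting word (measured with respect to $d$ on $X_H$) must not exceed $d(r)$. This is the point where one uses that the Magnus-type degree function is multiplicative on products and invariant under conjugation in $\F_p\langle\langle X\rangle\rangle$, so that rewriting does not inflate degrees. Once this compatibility is in place, the two displayed inequalities go through and $H$ inherits a generalized Golod–Shafarevich presentation with the same threshold $t_1$.
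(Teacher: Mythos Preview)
The paper does not prove this proposition at all; it simply quotes it from Ershov's survey \cite{survey}, Theorems~4.3 and~4.4. Your sketch therefore goes beyond what the paper does, and for part~(1) it accurately reproduces the standard Golod argument in the weighted setting.

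For part~(2) the Reidemeister--Schreier strategy is a reasonable route, but two points in your sketch need repair. First, the direction of the degree inequality for relators is backwards: since $t_0\in(0,1)$, to obtain $H_{d,R_H}(t_0)\le |G:H|\,H_{d,R}(t_0)$ you need the degree of each rewritten relator to be \emph{at least} $d(r)$, not ``not exceed $d(r)$''. Second, not every Schreier symbol $tx\overline{tx}^{\,-1}$ is nontrivial (those along a Schreier tree vanish), so in fact $H_{d,X_H}(t)$ is strictly smaller than $|G:H|\,H_{d,X}(t)$; this shortfall on the generator side has to be balanced against the relator side, and that bookkeeping is the genuinely delicate step. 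In the cited references one handles it either by reducing to subgroups of index $p$ with a carefully chosen reweighting, or through the formalism of positive weighted deficiency. By contrast, your concern about ``absorbing the constant~$1$'' is not an obstacle: from $H_{d,X}(t_0)-H_{d,R}(t_0)>1$ and $|G:H|\ge 1$ one gets $|G:H|\bigl(H_{d,X}(t_0)-H_{d,R}(t_0)\bigr)>1$ immediately, so no passage to a new $t_1$ is required.
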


In \cite[appendix]{Andrej} Jaikin-Zapirain showed that a finitely generated generalized Golod-Shafarevich group has subgroup growth of type at least $p^{n^\beta}$ for some $\beta>0$. Using the basic idea from his proof we show the following lemma. For that we need to recall that given a pro-$p$ group $G$ the $n$-dimension subgroups of $G$ is $$D_n(G)=\prod_{ip^j \geq n} \gamma_i(G)^{p^j}.$$
\begin{Lem}
\label{Lem:Golod-Shafarevich}
Let $H$ be a finitely generated generalized Golod-Shafarevich pro-$p$ group. Then there exists $c>0$, such that $H$ contains infinitely many open characteristic subgroups $N$ satisfying $d_H(N)>c\log(H:N)$.
\end{Lem}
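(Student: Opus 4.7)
The plan is to produce the required characteristic subgroups from the weighted dimension filtration attached to the generalized Golod-Shafarevich presentation of $H$, and then to convert the classical GGS lower bound on the Hilbert series of the associated graded algebra into a lower bound on $d_H(N)$.

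First I would fix a GGS presentation $\langle X\mid R\rangle$ of $H$ with degree function $d$ and $t_0\in(0,1)$ satisfying $P(t_0):=1-H_{d,X}(t_0)+H_{d,R}(t_0)<0$. Write $F=F(X)$, set $F_n=\{w\in F:d(w)\geq n\}$, and let $D_n$ be the image of $F_n$ in $H$. The standard compatibility of the Magnus map with commutators and $p$-th powers yields $[F_a,F_b]\subseteq F_{a+b}$ and $F_a^p\subseteq F_{pa}$, and the same relations descend to $H$; in particular $[H,D_n]D_n^p\subseteq D_{n+1}$. With the weights chosen canonically (for instance the intrinsic Zassenhaus-type weights which the GGS framework in \cite{GGS} always allows) the $D_n$ are characteristic in $H$, and they form an infinite family of distinct open subgroups since $H$ is infinite by Proposition~\ref{Prop:Golod-Shafarevich properties}(1).

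Write $a_n=\dim_{\F_p}(D_n/D_{n+1})$ and $S_n=\sum_{k<n}a_k$, so $\log_p(H:D_n)=S_n$. From the inclusion $[H,D_n]D_n^p\subseteq D_{n+1}$ and the Nakayama identity $d_H(N)=\dim_{\F_p}(N/[H,N]N^p)$ we obtain $d_H(D_n)\geq a_n$, and it therefore suffices to show that $a_n\geq c\,S_n$ on an infinite set of $n$, for some absolute $c>0$. For this I would invoke the generalized Golod-Shafarevich inequality (as in \cite{GGS} and \cite[Ch.~5]{survey}): the Hilbert series $A(t)=\sum a_n t^n$ of the associated graded algebra of $H$ satisfies $A(t)P(t)\geq 1$ coefficient-wise. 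Since $P(t_0)<0$ while $A(t_0)\geq 0$, the inequality cannot hold at $t_0$ if $A(t_0)$ is finite; hence $A(t_0)=+\infty$, the radius of convergence of $A$ is at most $t_0<1$, and $\limsup a_n^{1/n}\geq 1/t_0>1$. As $S_n\geq a_{n-1}$, also $\limsup S_n^{1/n}\geq 1/t_0$.

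An elementary telescoping argument now finishes the job. If $a_n\leq\varepsilon S_n$ for all sufficiently large $n$, then $S_{n+1}=S_n+a_n\leq(1+\varepsilon)S_n$, so $\limsup S_n^{1/n}\leq 1+\varepsilon$, contradicting the previous bound as soon as $\varepsilon<(1-t_0)/t_0$. So for any fixed $c<(1-t_0)/t_0$ we have $a_n\geq c\,S_n$ infinitely often, and combining with $d_H(D_n)\geq a_n$ gives $d_H(D_n)\geq(c/\log p)\log(H:D_n)$ on this subsequence, as required. The step I expect to be the main obstacle is the characteristic issue for the $D_n$: for a completely arbitrary degree function the filtration is only normal in $H$ and not obviously characteristic, so one must either invoke an intrinsic choice of weighting or replace each $D_n$ by a nearby $\Aut(H)$-invariant subgroup while verifying that the inequality $d_H(D_n)\geq a_n$ still survives (up to adjusting $c$).
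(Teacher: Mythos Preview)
Your overall architecture---take the dimension filtration, show the successive quotients grow exponentially in $n$, and convert that into $d_H(D_n)\geq c\log(H:D_n)$ infinitely often---is exactly the paper's. The two differences are worth noting.

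First, the characteristic issue you flag is the real gap in your version, and the paper simply sidesteps it: the $D_n$ used there are the \emph{ordinary} Zassenhaus dimension subgroups of $H$ (defined intrinsically via the lower $p$-series), not the weighted ones, so they are automatically characteristic. Because all the weights $d_x$ are positive integers, an element of the standard $D_n$ still has \emph{weighted} degree at least $n$; that is the only place the weighting enters, and it suffices.

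Second, the paper does not go through the Hilbert-series inequality $A(t)P(t)\geq 1$ at all. Instead it argues directly on generators: if $g_1,\dots,g_m$ generate $D_n$ as a normal subgroup of $H$, lift them to $r_1,\dots,r_m\in F(X)$ of (weighted) degree $\geq n$; adjoining these to the relator set $R$ presents the finite group $H/D_n$, which therefore cannot be GGS, and so
\[
0\leq 1-H_{d,X}(t_0)+H_{d,R}(t_0)+m t_0^{\,n}=m t_0^{\,n}-\delta,
\]
giving $d_H(D_n)\geq \delta t_0^{-n}$ for \emph{all} $n$. This relator-counting step is the device that makes the standard (characteristic) $D_n$ work without needing the weighted graded algebra. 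From there the paper splits into two cases according to whether $\limsup \log(D_n{:}D_{n+1})/a^n$ is finite or not; your telescoping argument is a neater way to finish and would work equally well once you have the uniform bound $d_H(D_n)\geq a^n$.

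So: replace your weighted $D_n$ by the standard dimension subgroups, and replace the appeal to the coefficientwise inequality for $A(t)$ by the relator-counting argument above. Then your proof goes through with no residual ``obstacle'', and your telescoping endgame is if anything tidier than the paper's case split.
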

\begin{proof}
Pick a presentation $\langle X|R\rangle$ of $H$, a degree function $d$, and a real number $t_0\in(0,1)$ such that $1-H_{d, X}(t_0)+H_{d, R}(t_0)=-\delta<0$. 
Our aim is to show that $D_n=D_n(H)$ needs many generators as normal subgroups of $H$. Since $\Phi_H(D_n)\leq D_{n+1}$ we have that $d_H(D_n)\geq\log(D_n:D_{n+1})$.

Let $g_1, \dots, g_m$ be elements of $H$, which generate $D_n$ as a normal subgroup. Then there exist elements $r_1, \ldots, r_m\in F(X)$, where $F(X)$ is the free pro-$p$ group on the set $X$, such that $r_i$ maps to $g_i$ under the map $F(X)\rightarrow H$ induced by the presentation. The degree of $r_i$ is at least $n$, since $r_i\in D_n$. Put $\widetilde{R}=R\cup\{r_1, \ldots, r_m\}$. Then $\langle X|\widetilde{R}\rangle = H/D_n$ is a finite group, in particular, this group does not satisfy the Golod-Shafarevich inequality. The Hilbert series of this finite group is $H_R(t)+\sum_{i=1}^m t^{d(r_i)}$, hence
\[
0\leq 1-H_{d,x}(t_0)+H_{\widetilde{R}}(t_0) \leq 1-H_{d,X}(t_0)+H_R(t_0) + mt_0^n = mt_0^n-\delta.
\]
We conclude that $m>\delta t_0^{-n}$, and therefore $m>a^n$ for some $a>1$, provided that $n$ is sufficiently large.

Combining our estimates we obtain
\[
d_H(D_n)\geq\max\big(a^n, \log(D_n:D_{n+1})\big).
\]
Suppose first that $\alpha=\limsup\frac{\log(D_n:D_{n+1})}{a^n}$ is finite. Then for all $n$ there exists a constant $A$ such that
\[
\log(H:D_n) = \sum_{\nu=1}^{n-1}\log(D_\nu:D_{\nu+1})\leq A+(\alpha+1)\sum_{\nu=1}^{n-1} a^\nu \leq A+\frac{\alpha+1}{a-1}a^n.
\]
Since $D_n$ is a characteristic subgroup of $H$ and $d_H(D_n)\geq a^n$ we deduce our claim.

On the other hand, if $\limsup\frac{\log(D_n:D_{n+1})}{a^n}$ is infinite, then we can pick a subsequence $(n_i)$, such that for all $i$ and  for all $m<n_i$ we have that $$\frac{\log(D_{n_i}:D_{n_i+1})}{a^{n_i}}>\frac{\log(D_m:D_{m+1})}{a^m}.$$ For such $n_i$ we have that
\begin{multline*}
\log(H:D_{n_i}) = \sum_{\nu=1}^{n_i-1} \log(D_\nu:D_{\nu+1}) \leq \log(D_{n_i}:D_{n_i+1})\sum_{\nu=1}^{n_i-1} a^{\nu-n_i}\\
 \leq \frac{1}{a-1}\log(D_{n_i}:D_{n_i+1}) \leq \frac{1}{a-1} d_H(D_{n_i}).
\end{multline*}
Again, our claim follows.
\end{proof}

We can now prove Theorem~\ref{thm:GS}.
\begin{proof}[Proof of Theorem~\ref{thm:GS}] Let $G, H$ be as in the theorem. By Proposition~\ref{Prop:Golod-Shafarevich properties} (2) we may replace $H$ by an open normal subgroup, which is still generalized Golod-Shafarevich. Then Lemma~\ref{Lem:Golod-Shafarevich} implies that $H$ satisfies the assumptions of Lemma~\ref{Lem:virtual}, and we conclude that $G$ has normal subgroup growth of type $n^{\log n}$.
\end{proof}

\section{Groups with large meta-(elementary abelian) sections}

Let us prove part~1 of Theorem~\ref{thm:elementary}.
\begin{proof}[Proof of Theorem~\ref{thm:elementary}, part~1] Recall that $\Gamma$ is a $d$-generated group, $\Delta$ a normal subgroup of $\Gamma$ of finite index, $p$ a prime number, $H$ the pro-$p$ completion of $\Delta$, and $\Psi$ is the pre-image of $\Phi(H)=H^p[H,H]$, the Frattini subgroup of $H$, under the canonical map $\Delta \rightarrow H$. We write $L=\Phi_{H}(\Phi(H))=\Phi(H)^p[\Phi(H),H]$ for the normal Frattini subgroup of $\Phi(H)$ in $H$ and $\Lambda$ for its preimage in $\Delta$.

Clearly $\Psi$ and $\Lambda$ are characteristic subgroups in $\Delta$ and thus normal in $\Gamma$.  Suppose that $x_1, \ldots, x_d$ are generators of $\Psi$ as a normal subgroup of $\Gamma$. Then $x_1\Lambda, \ldots, x_d\Lambda$ are generators of $\Psi/\Lambda$ as a normal subgroup of $\Gamma/\Lambda$. Hence, $X = \bigcup \left(x_i\Lambda \right)^{\Gamma/\Lambda}$ generates $\Psi/\Lambda$ as a subgroup. Since $\Psi/\Lambda$ is central in $\Delta/\Lambda$, we have $\Delta/\Lambda \leq C_{\Gamma/\Lambda}(x_i\Lambda)$, which implies
\[
\left| \left(x_i\Lambda \right)^{\Gamma/\Lambda} \right| = \left(\Gamma/\Lambda:C_{\Gamma/\Lambda}(x_i\Lambda) \right)\leq (\Gamma/\Lambda:\Delta/\Lambda)=(\Gamma:\Delta).
\]
We deduce that
\[
d_{\Gamma}(\Psi) \geq d_{\Gamma/\Lambda}(\Psi/\Lambda) \geq \frac{d(\Psi/\Lambda)}{(\Gamma:\Delta)}.
\]

Notice that $H/L$ is the maximal CMEA-quotient of $H$. Because $\Psi/\Lambda$ is an elementary abelian $p$-group we have that
\begin{multline*}
d_{\Gamma}(\Psi) \geq \frac{d(\Psi/\Lambda)}{(\Gamma:\Delta)}=\frac{\log(\Psi:\Lambda)}{(\Gamma:\Delta)}=
\frac{\log(\Phi(H):L)}{(\Gamma:\Delta)}= \\ \frac{\log(H:L)-\log(H:\Phi(H))}{(\Gamma:\Delta)}=
\frac{\rk_{cm}H - d(H)}{(\Gamma:\Delta)} \geq \frac{\rk_{cm}H - d(\Delta)}{(\Gamma:\Delta)} \geq
\\ \frac{\rk_{cm}H - d(\Gamma)(\Gamma:\Delta))}{(\Gamma:\Delta)}
= \frac{\rk_{cm}H}{(\Gamma:\Delta)} - d,
\end{multline*}
and the first part of Theorem~\ref{thm:elementary} follows.
\end{proof}

To prove part~2 of the theorem we need the following lemma from Gerdau \cite{Victor}. Since it has not been published we include the proof here. Note that the statement is quite easy, if $p\nmid |G|$, for in that case every module decomposes into a sum of simple modules and there are only finitely  many isomorphism types of simple modules of $\mathbb{F}_pG$.
\begin{Lem}
\label{Lem:module}
Let $G$ be a finite group and $p$ a prime number. Then there exists a constant $c>0$ depending only on $G$ and $p$, such that for all $\mathbb{F}_pG$ modules $M$ of $\mathbb{F}_p$-dimension $d$ there exist submodules $M_1<M_2<M$, such that $M_2/M_1$ is the direct sum of at least $cd$ isomorphic simple modules.
\end{Lem}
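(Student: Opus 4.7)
The plan is to use the Loewy (radical) filtration of $M$, whose length is bounded by a constant depending only on $G$ and $p$. Let $J=J(\mathbb{F}_p G)$ be the Jacobson radical of the group algebra. Since $\mathbb{F}_p G$ is finite dimensional and Artinian, $J$ is nilpotent: there is some $\ell=\ell(G,p)$ with $J^\ell=0$. Let $k=k(G,p)$ be the number of isomorphism classes of simple $\mathbb{F}_p G$-modules, and $D=D(G,p)$ the maximum $\mathbb{F}_p$-dimension of a simple $\mathbb{F}_p G$-module. All three quantities $\ell, k, D$ depend only on $G$ and $p$.

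Given an $\mathbb{F}_p G$-module $M$ of dimension $d$, consider the descending filtration
\[
M \supseteq JM \supseteq J^2 M \supseteq \cdots \supseteq J^\ell M = 0,
\]
whose successive quotients $Q_i = J^i M / J^{i+1} M$ are annihilated by $J$ and hence semisimple. Since $\sum_{i=0}^{\ell-1}\dim Q_i = d$, by pigeonhole there is some index $i$ with $\dim Q_i \geq d/\ell$. Write this semisimple quotient as a direct sum of isotypic components $Q_i = \bigoplus_{j=1}^{k'} S_j^{n_j}$ where the $S_j$ are pairwise non-isomorphic simples and $k' \leq k$. Each $S_j^{n_j}$ has dimension $n_j \dim S_j \leq n_j D$, and the sum of the $n_j \dim S_j$ equals $\dim Q_i \geq d/\ell$, so some isotypic component $S_{j_0}^{n_{j_0}}$ has dimension at least $d/(\ell k)$, giving $n_{j_0} \geq d/(\ell k D)$.

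Now set $M_1 = J^{i+1} M$ and let $M_2$ be the preimage in $J^i M$ of the submodule $S_{j_0}^{n_{j_0}} \leq Q_i$ under the projection $J^i M \twoheadrightarrow Q_i$. Then $M_2/M_1 \cong S_{j_0}^{n_{j_0}}$ is a direct sum of $n_{j_0} \geq cd$ copies of a simple module, with $c = 1/(\ell k D)$ depending only on $G$ and $p$, as required.

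The only subtlety, and the single fact that drives the argument, is that the Loewy length of every $\mathbb{F}_p G$-module is uniformly bounded by $\ell$; this is immediate from the nilpotence of the Jacobson radical of the finite-dimensional algebra $\mathbb{F}_p G$. Everything else is counting: the radical filtration locates a semisimple quotient of dimension at least $d/\ell$, and a double pigeonhole (on layers and then on isotypic types, using boundedness of both $k$ and $D$) produces a large isomorphic summand, which lifts to the desired pair $M_1 < M_2$.
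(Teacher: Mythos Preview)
Your proof is correct and arguably cleaner than the paper's, but it proceeds by a different filtration. The paper does not invoke the Jacobson radical at all: it defines an invariant $\ell(M)$ as the maximal composition length of a \emph{cyclic} submodule of $M$ (bounded by $|G|$), and then inducts on $\ell(M)$ by peeling off the socle $N$ of $M$. If $\dim N>\tfrac12\dim M$ one finds a large isotypic piece inside $N$; otherwise one passes to $M/N$, whose invariant has dropped, and pulls back. This socle-based induction produces a constant of the shape $c_\ell=\tfrac12\min(c_1,c_{\ell-1})$, so roughly $2^{-|G|}/\sum\dim S$.

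Your argument replaces this induction by a single pass through the radical (upper Loewy) series $M\supseteq JM\supseteq\cdots\supseteq J^\ell M=0$, using only that $J=J(\mathbb{F}_pG)$ is nilpotent of index $\ell$ depending on $G$ and $p$. A double pigeonhole on layers and then on isotypic types gives the explicit constant $c=1/(\ell kD)$. The two approaches are dual in spirit (socle series versus radical series), but yours avoids the recursion, names the governing quantity (the nilpotency index of $J$) directly, and yields a visibly better constant. Nothing is lost: both proofs need the same two finiteness facts, a bounded number of simple types and a uniform bound on the filtration length.
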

\begin{proof}
For a module $M$ define $\ell(M)$ as the maximal length of an ascending series of submodules in a cyclic submodule of $M$.
Since a cyclic $\F_pG$-module has dimension at most $|G|$, we have that $\ell(M) \leq |G|$. Let $c_1=1/\left(\sum \dim S\right)$, where $S$ runs over all isomorphism classes of simple modules. We prove by induction on $\ell=\ell(M)$ that there exists such a constant $c_\ell>0$ and we take $c$ to be the minimal amongst all the $c_{\ell}$.

Define $N$ to be the submodule of $M$ generated by all simple submodules of $M$. Recall that $N$ is a direct sum of simple modules. Note that any simple module is cyclic and therefore, there are only finitely many isomorphism classes of simple modules. Hence, we have that $N$ is the direct sum of at least $c_1\dim_{\F_p} N$ simple modules of the same isomorphism class. Because a simple module is cyclic we have that $N$ contains a direct sum of at least $c_1\dim_{\F_p} N$ cyclic modules of the same isomorphism class.

For $\ell=1$ we have that all cyclic modules are simple, thus, $M=N$ and the statement follows from the above paragraph. We continue by induction on $\ell$. If $\dim_{\F_p} N>\frac{1}{2}\dim_{\F_p} M$, then $N$ contains a direct sum of $c_1\dim_{\F_p} N\geq \frac{c_1}{2}\dim_{\F_p} M$ isomorphic simple modules. If $\dim_{\F_p} N \leq \frac{1}{2}\dim_{\F_p} M$, then $\ell(M/N)\leq\ell(M)-1$, as the first module in an ascending series of maximal length must be a simple module. By the induction hypothesis $M/N$ contains submodules $M_1N<M_2N$, such that $M_2N/M_1N$ is the direct sum of at least $c_{\ell-1}\dim_{\F_p} M/N$ isomorphic simple modules. We conclude that our claim holds with $c_{\ell}=\frac{1}{2}\min(c_1, c_{\ell-1})$.
\end{proof}

Just as in the case of vector spaces, direct sums of isomorphic modules have many submodules. The following is \cite[Lemma~1]{normal II}.

\begin{Lem}
\label{Lem:counting submodules}
Let $G$ be a finite group, $M$ an $\F_pG$-module. Then there is some $c>0$, such that $M^n$ contains at least $e^{c n^2}$ submodules.
\end{Lem}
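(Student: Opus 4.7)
The plan is to exhibit many submodules of $M^n$ by the ``graph of a homomorphism'' construction. Assuming $M$ is a nontrivial finite $\F_pG$-module (the only interesting case), my first step is to locate a simple $\F_pG$-submodule $S\subseteq M$: take any minimal non-zero submodule. By Schur's lemma, the endomorphism ring $D=\mathrm{End}_{\F_pG}(S)$ is a finite division ring, and by Wedderburn's little theorem it is a finite field $\F_q$ with $q=|D|\geq p\geq 2$. Since $S^n$ embeds in $M^n$ as an $\F_pG$-submodule, it suffices to produce $e^{cn^2}$ submodules of $S^n$.

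Next I would set $k=\lfloor n/2\rfloor$, write $S^n=S^k\oplus S^{n-k}$, and associate to each $\F_pG$-homomorphism $\varphi\colon S^k\to S^{n-k}$ its graph $\Gamma_\varphi=\{(v,\varphi(v)):v\in S^k\}$, which is an $\F_pG$-submodule of $S^n$ (the image of $(\mathrm{id},\varphi)\colon S^k\to S^k\oplus S^{n-k}$). Distinct homomorphisms $\varphi_1\neq\varphi_2$ yield distinct graphs: picking $v$ with $\varphi_1(v)\neq\varphi_2(v)$, the element $(v,\varphi_1(v))$ lies in $\Gamma_{\varphi_1}$ but not in $\Gamma_{\varphi_2}$. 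Consequently $S^n$ has at least $|\mathrm{Hom}_{\F_pG}(S^k,S^{n-k})|$ distinct submodules.

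To finish, a homomorphism between direct sums of copies of the simple module $S$ is precisely an $(n-k)\times k$ matrix of elements of $\mathrm{End}_{\F_pG}(S)=D$, so $|\mathrm{Hom}_{\F_pG}(S^k,S^{n-k})|=q^{k(n-k)}\geq 2^{\lfloor n/2\rfloor\lceil n/2\rceil}\geq 2^{(n^2-1)/4}$. Any constant $c<(\log 2)/4$ then yields at least $e^{cn^2}$ submodules for all large $n$, and shrinking $c$ absorbs the finitely many exceptional small values.

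There is no serious obstacle to this plan; the subtlest ingredient is the combination of Schur's lemma and Wedderburn's theorem that identifies $\mathrm{End}_{\F_pG}(S)$ with a finite field, which, together with the elementary graph construction, carries the counting. Conceptually, the lemma simply reflects the fact that an $n$-dimensional vector space over $\F_q$ already has at least $q^{\lfloor n^2/4\rfloor}$ subspaces, and this rigidity transfers to submodules of $S^n$.
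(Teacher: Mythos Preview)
The paper does not actually prove this lemma; it merely cites it as \cite[Lemma~1]{normal II} and moves on. Your argument is correct and is the standard one: pass to a simple submodule $S\subseteq M$, use Schur's lemma together with Wedderburn's little theorem to identify $\mathrm{End}_{\F_pG}(S)$ with a finite field $\F_q$, and then count graphs of $\F_pG$-homomorphisms $S^k\to S^{n-k}$ inside $S^n\subseteq M^n$, obtaining $q^{k(n-k)}\geq 2^{\lfloor n^2/4\rfloor}$ distinct submodules. The only caveat, which you already flag, is that the lemma as stated tacitly assumes $M\neq 0$; with that in place your bound gives the claim for any $c<(\log 2)/4$.
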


We are now ready to prove part~2 of Theorem~\ref{thm:elementary}.
\begin{proof}[Proof of Theorem~\ref{thm:elementary}, part~2] Let $\Delta$, $H$ and $\Delta_i$ be as in the theorem. Let $H_i$ be the closure of the image of $\Delta_i$ in $H$, $\Psi_i$ be the pre-image of $\Phi(H_i)$ in $\Delta$, and $\Lambda_i$ be the pre-image of $\Phi_H(\Phi(H))$.

Let $\Omega$ be the kernel of the map $\Delta\rightarrow H$. Then $\Psi_i$ is generated by $\Omega$, $[\Delta, \Delta_i]$ and $\Delta_i^p$. Since all three groups are normal in $\Gamma$, $\Lambda_i$ is also normal in $\Gamma$ and similarly $\Lambda_i$ is normal in $\Gamma$. Hence, $\Gamma$ acts on $\Psi_i/\Lambda_i$ by conjugation. Because $\Delta$ acts trivial on $\Psi_i/\Lambda_i$, we have that $\Psi_i/\Lambda_i$ is an $\F_p(\Gamma/\Delta)$-module. From Lemma~\ref{Lem:module} and Lemma~\ref{Lem:counting submodules} it follows that there is some $c'>0$, depending only on $\Gamma/\Delta$ and $p$, such that $\Psi_i/\Lambda_i$ has at least $e^{c'(\dim_{\F_p}\Psi_i/\Lambda_i)^2}$ submodules. Since there is a bijection between submodules and normal subgroups of $\Gamma$, which are contained in the interval $(\Psi_i, \Lambda_i)$, we conclude that $\Gamma$ has at least $e^{c'(\dim_{\F_p}\Psi_i/\Lambda_i)^2}$ normal subgroups of index at most $(\Gamma:\Lambda_i)$. Therefore, to prove the theorem it suffices to show that there exists some $\delta>0$, independent of $i$, such that $\dim_{\F_p}\Psi_i/\Lambda_i \geq \delta \log (\Gamma:\Lambda_i)$.

As in the proof of part~1 we have
\[
\log(\Phi(H_i):\Phi_{H_i}(\Phi(H_i))) = \rk_{cm}H_i - d(H_i) \geq \rk_{cm}H_i - d(\Delta_i) \geq\rk_{cm}H_i - d(\Gamma)(\Gamma:\Delta_i).
\]
It follows from the assumption in the theorem that
$$
\log(\Psi_i:\Lambda_i) = \log(\Phi(H_i):\Phi_H(\Phi(H_i))) \geq c(\Gamma:\Delta_i)^2- d(\Gamma)(\Gamma:\Delta_i)\geq c(\Gamma:\Delta_i)- d(\Gamma),
$$
so
$$(\Gamma:\Delta_i) \leq \frac{\log(\Psi_i:\Lambda_i) + d(\Gamma)}{c}.$$
We deduce that
\begin{eqnarray*}
\log (\Gamma:\Lambda_i) & = & \log(\Gamma:\Delta_i)+\log(\Delta_i:\Psi_i)+\log(\Psi_i:\Lambda_i)\\
 & \leq & \log\left(\frac{\log(\Psi_i:\Lambda_i) + d(\Gamma)}{c}\right) + d(H_i) + \log(\Psi_i:\Lambda_i)\\
 & \leq & \log(\Psi_i:\Lambda_i) + d(H)(H:H_i)  + \log(\Psi_i:\Lambda_i)\\
 & \leq & d(H)(\Gamma:\Delta_i)  + 2\log(\Psi_i:\Lambda_i)\\
 & \leq & \frac{d(H)}{c}\log(\Psi_i:\Lambda_i) + \frac{d(H)d(\Gamma)}{c} + 2\log(\Psi_i:\Lambda_i)\\
 & \leq & \left(\frac{d(H)}{c}+3\right)\log(\Psi_i:\Lambda_i),
\end{eqnarray*}
provided that $(\Psi_i:\Lambda_i)$ is sufficiently large. Putting $\delta=\left(\frac{d(H)}{c}+3\right)^{-1}$,
we have that $\dim_{\F_p}\Psi_i/\Lambda_i = \log(\Psi_i:\Lambda_i) \geq \delta \log (\Gamma:\Lambda_i)$ as required, and part~1 of Theorem~\ref{thm:elementary} is proven.
\end{proof}

\begin{proof}[Proof of Proposition~\ref{prop:triabelian}] Let $\widehat{F}_d$ be the free pro-$p$ group of rank $d$. Clearly the normal growth of $\widehat{F}/\gamma_2(\widehat{F}_d')$ is a lower bound for the normal growth of $F_d/\gamma_2(F_d')$. Let $N$ be the normal subgroup of $\widehat{F}_d$ such that $\widehat{F}_d/N \cong (\Z/p^k\Z)^d$. Then $N  \geq \widehat{F}_d'$ and therefore, $\Phi(N) \geq \widehat{F}_d''=\gamma_1(\widehat{F}_d')$ and also $\Phi_N(\Phi(N))\geq \gamma_2(\widehat{F}_d')$. Thus, $(\widehat{F}_d:N)$, $(N:\Phi(N))$, and $\Phi(N)/\Phi_N(\Phi(N))$ do not change when we factor by $\gamma_2(\widehat{F}_d')$. Let $d_1=p^k(d-1)+1$, then $N\cong \widehat{F}_{d_1}$. Since $N$ is a free pro-$p$ group it surjects onto the free CMEA group with $d_1$ generators. The latter has order $p^{d_1+\binom{d_1+1}{2}}>p^{d_1^2/2}$, so $rk_{cm} N> \frac{d_1^2}{2}$. Hence,
\[
rk_{cm} N >\frac{d_1^2}{2}= \frac{1}{2}\left((d-1)(\widehat{F}_d:N)+1\right)^2 > \frac{(d-1)^2}{2}(\widehat{F}_d:N)^2,
\]
and we can apply Theorem~\ref{thm:elementary}. On the other hand $F/\gamma_2(F_d')$ is not Golod-Shafarevich, since, as Zelmanov showed in \cite{Ze}, Golod-Shafarevich groups always contain non-abelian free pro-$p$ subgroups, which $F/\gamma_2(F_d')$ clearly does not.
\end{proof}

\begin{proof}[Proof of Corollary~\ref{Cor:characteristic}]  Let $\Gamma$ be as in the corollary. Since the center of $\Gamma$ is trivial, we can view $\Gamma$ as a subgroup of $\Aut(\Gamma)$. A subgroup of $\Gamma$ is characteristic in $\Gamma$ if and only if it is normal in $\Aut(\Gamma)$. Since $(\Aut(\Gamma):\Gamma)$ is finite, we can apply Theorem~\ref{thm:elementary} to the pair $\Aut(\Gamma)>\Gamma$ in place of $\Gamma>\Delta$, and find that $\Gamma$ has characteristic subgroup growth of type $n^{\log n}$.

It remains to show that free products of finite groups have trivial center and finite outer automorphism group. The statement about the center is easy. In fact, as non-abelian free groups have trivial center, every element of the center of $A \ast B$ is contained in a conjugate of $A$ or $B$. But elements in conjugates of $A$ do not commute with elements in conjugates of $B$, hence the center is trivial. The finiteness of the outer automorphism group was shown by Pettet in \cite[Proposition~2.5]{Pettet}.
\end{proof}

\section{Problems and remarks}

Lemma~\ref{Lem:virtual} falls short of our expectations in two aspects. First, it is unfortunate that $N$ are required to be characteristic subgroups of $H$ rather than normal. We therefore ask the following.

\begin{Prob}
Does there exist a pro-$p$ group $G$, such that for some $c>0$ there exist infinitely many open normal subgroups $N$ with $d_G(N)\geq c\log (G:N)$, but $\lim\frac{d_G(C)}{\log (G:C)}=0$, as $C$ ranges over characteristic open subgroups?
\end{Prob}

The second problem is that Lemma~\ref{Lem:virtual} only gives information on normal growth of type $n^{\log n}$ because our counting methods are very crude. To give an upper bound for the number of normal subgroups one considers all chains of the form $G>N_1>\dots>N_n$, disregarding the facts that $N_n$ might be contained in many different chains, and that not all normal subgroups need the maximal number of generators. For the lower bound one considers one normal subgroup $N$ which needs many generators and counts only normal subgroups between $N$ and $\Phi_G(N)$. Both estimates appear heavily wasteful, but for growth types $n^{\log n}$ and larger the difference is actually quite small. However, if $$\displaystyle{\max_{(G:N)=n}d_G(N)\approx f(\log n)}$$ with $f(k)=o(k)$, one only gets a lower bound of type $e^{f(\log n)^2}$ and an upper bound $e^{f(\log n)\log n}$. In particular, we do not know whether an analogue of Lemma~\ref{Lem:generators} exist. We therefore ask the following.

\begin{Prob}
Do there exist pro-$p$ groups $G, H$, such that for all sufficiently large $n$ we have $$\displaystyle{\max_{(G:N)\leq p^k}d_G(N) > \max_{(H:K)\leq p^k}d_H(K)},$$ but $$\frac{\log s_{p^k}^{\normal}(G)}{\log s_{p^k}^{\normal}(H)} \underset{k \to \infty}{\longrightarrow} 0?$$
\end{Prob}

 In addition, we believe that some generalization of Theorem~\ref{thm:elementary} should be true, although we are not sure what form it should take. Informally, Theorem~\ref{thm:elementary} assumes that $G$ contains infinitely many normal subgroups, which map onto CMEA groups of "size comparable to the corresponding quantity in a free group". If we replace "CMEA" by "elementary abelian" in this statement, we obtain positive upper rank gradient which is equivalent to exponential subgroup growrh. Here we define $\mathrm{rg}^+$ and $\mathrm{rg}^-$ the \textit {upper and the lower rank gradient} of a pro-$p$ group $G$ respectively as
 \[
 \mathrm{rg}^+ = \sup_{G>U_1>U_2>\dots} \lim_{n\rightarrow\infty} \frac{d(U_i)}{(G:U_i)},\qquad\mathrm{rg}^- = \inf_{G>U_1>U_2>\dots} \lim_{n\rightarrow\infty} \frac{d(U_i)}{(G:U_i)},
 \]
 where infimum and supremum are taken over all descending chains of finite index subgroups.

 However, positive upper rank gradient does not imply large normal growth, for example, $G$, the pro-$p$ completion of the restricted wreath product $\Gamma=\Z\wr\F_p$, has polynomial normal growth as shown in the example after Theorem~9.2 in \cite{subgroup growth}, however, let us see that it has positive upper rank gradient.

 Let $\Sigma$ be the base group of $\Gamma$. We have a map $\psi_k:\Gamma \rightarrow P_k=C_{p^k}\wr\F_p$. Let $B$ be the base group of $P_k$ and let $\Delta$ be the pre-image of $B$ and $N$ its closure in $G$. Then $\Delta$ is a normal subgroup of $\Gamma$ of index $p^k$, $\Sigma \leq \Delta$, and $[\Delta,\Delta]\Delta^p$ is in the kernel of $\psi_k$ because $B$ is an elementary abelian $p$-group. Hence,
 $$(N:\Phi(N)) \geq (\Delta:[\Delta,\Delta]\Delta^p) \geq (\Delta \cap \Sigma:[\Delta,\Delta]\Delta^p \cap \Sigma) \geq (\Sigma:\ker \psi_k  \cap \Sigma) = |B|=p^{p^k},$$
 and we obtain that $d(N) \geq p^k=[\Gamma:\Delta]=[G:N]$. In particular, the upper rank gradient of $G$ is as large as the rank gradient of a free group with 2 generators, and the subgroup growth of $G$ is exponential.

One reason that explains the discrepancy between the large subgroup growth and the moderate normal growth is the fact that if a subgroup  $U$ requires many generators, then it contains essentially the whole base group, while $\Phi(U)$ cuts deeply into the base group and needs only few generators. In particular, there are no large CMEA sections in $G$, because most subgroups need only few generators. It would be interesting to know what happens if we circumvent this obstacle by assuming that all subgroups need many generators. We therefore ask the following problem.

\begin{Prob}
If $G$ has positive lower rank gradient, does $G$ necessarily have large normal growth?
\end{Prob}

It is not hard to see that the conditions in Theorem~\ref{thm:elementary} imply exponential subgroup growth. Thus, comparing it to Theorem~\ref{thm:GS} naturally leads to the following problem.

\begin{Prob}
Can you give an example of a generalized Golod-Shafarevich group with subexponential subgroup growth? If not, can you give some other example of a generalized Golod-Shafarevich group to which Theorem~\ref{thm:elementary} cannot be applied? More generally, does the assumption that $G$ has normal subgroup growth of type $n^{\log n}$ imply any other largeness properties?
\end{Prob}

The last question is probably very difficult, as for example the pro-2 completion of Grigorchuk's group or the Nottingham group contain non-abelian free pro-$p$ subgroups, but have only a bounded number of normal subgroups of any given index. Finally we would like to know the answer to the original claim from \cite{normal II}.

\begin{Prob}
Let $\Gamma$ be a large group, that is, it contains a finite index subgroup $\Delta$, which projects onto a non-abelian free group $F$. Does $\Gamma$ have normal subgroup growth of type $n^{\log n}$?
\end{Prob}

Note that a negative answer to Problem~2 would immediately resolve this question in the positive.

For characteristic subgroup growth the first question one might consider is the following.
\begin{Prob}
What is the characteristic subgroup growth of a non-abelian free group?
\end{Prob}
This question was posed by I.~Rivin on MathOverflow, see \cite{MathOverflow characteristic}, and seems intriguingly difficult. W.~Thurston gave an argument that the growth should be similar to the characteristic subgroup growth of $\Z^n$. However, the direct approach leads to well-known open problems on the product replacement algorithm. A first step in this direction could be the following probelm, which is due to Lubotzky.
\begin{Prob}
Is there a characteristic subgroup $C$ in $F_3$, such that $F_3/C$ is a finite simple group?
\end{Prob}

\end{document}